\def\A{{\cal A}}
\newcommand{\ba}{\begin{eqnarray}}
\newcommand{\ea}{\end{eqnarray}}
\newtheorem{thm}{Theorem}[section]
\newtheorem{conjecture}{Conjecture}
\newtheorem{theorem}[thm]{Theorem}
\newtheorem{lemma}[thm]{Lemma}
\newtheorem{claim}[thm]{Claim}
\newcommand*{\rom}[1]{\expandafter\@slowromancap\romannumeral #1@}
\date{}
\begin{document}
\author{Batoul Tarhini$^{1,2}$, Olivier Togni$^{1}$} \footnotetext[1]{LIB Laboratory, University of Burgundy, Dijon, France.}
 \footnotetext[2]{KALMA Laboratory, Faculty of Sciences, Lebanese University, Beirut, Lebanon.}
     
\title{$S$-Packing Coloring of Cubic Halin Graphs}
\maketitle
\bibliographystyle{alpha}

\begin{abstract}
Given a non-decreasing sequence $S = (s_{1}, s_{2}, \ldots , s_{k})$ of positive integers, an $S$-packing coloring of a graph $G$ is a partition of the vertex set of $G$ into $k$ subsets $\{V_{1}, V_{2}, \ldots , V_{k}\}$ such that for each $1 \leq i \leq k$, the distance between any two distinct vertices $u$ and $v$ in $V_{i}$ is at least $s_{i} + 1$. In this paper, we study the problem of $S$-packing coloring of cubic Halin graphs, and we prove that every cubic Halin graph is $(1,1,2,3)$-packing colorable. In addition, we prove that such graphs are $(1,2,2,2,2,2)$-packing colorable.
\end{abstract}

\section{Introduction}
\pagenumbering{arabic}
For a sequence of non-decreasing positive integers $S = (s_{1}, \dots , s_{k})$, an {\em $ S$-packing coloring} of a graph $G$ is a partition of $V (G)$ into sets $V_{1}, \dots , V_{k} $ such that for each $1 \leq i \leq k$ and $u\ne v \in V_{i}$, we have $ d(u,v)\geq s_{i}+1$, where $d(u,v)$ is the distance between $u$ and $v$ in $G$.\\ The smallest $k$ such that $G$ has a $(1, 2,\dots , k)$-packing coloring ($k$-packing coloring)
is called the {\em packing chromatic number} of $G$ and is denoted by $\chi_{p}(G)$.\\ Graphs considered in this paper are simple, having no loops or multiple edges. For a graph $G$, we denote by $V(G)$ the set of vertices of $G$ and $E(G)$ the set of its edges.  
We denote by $S(G)$ the graph obtained from $G$ by subdividing every edge.\\A subcubic graph is a graph whose maximum degree is at most 3. A cubic graph $G$ is a graph in which every vertex has degree 3.\\ For a vertex $v$ in a graph $G$, we denote by $N(v)$ the set of neighbors of $v$ in $G$.\\Let $C=a_{1} a_{2} \dots a_{n}$ be a cycle in $G$. Denote by $C_{[a_{i},a_{j}]}$ the part of the cycle which is the path $a_{i} a_{i+1}\dots a_{j}$, and by $l(C_{[a_{i},a_{j}]})$ the length of this part of the cycle where $i+1=1$ for $i=n$. We say that two vertices $u$ and $v$ are {\em consecutive} on $C$ if $uv\in E(C)$.\\ A proper $k$-coloring of a graph $G$ is a mapping $\phi: V(G) \longrightarrow \{1, 2,\dots ,k\}$ that assigns to each vertex $v$ of $G$ a color $i \in [k]$ such that for any two adjacent vertices $u$ and $v$ of $G$, we have $\phi(u) \neq \phi (v) $. In this case, we say that $G$ is \textit{$k$-colorable}. It is clear that $(1,1,\dots ,1)$-packing colorings are the standard proper colorings of a graph. 
\\A \textit{bipartite graph} is a graph whose vertices can be divided into two disjoint stable sets $U$ and $V$ such that every edge connects a vertex in $U$ to a vertex in $V$. It is clear that any tree is a bipartite graph and that any bipartite graph is 2-colorable.\\A Halin graph is a planar graph constructed by connecting the leaves of a tree into a cycle such that the tree is of order at least four, and in which the degree of each vertex is either one which is called a leaf, or at least 3. For a Halin graph $G$, we write $G=T\cup C$ where $T$ is its characteristic tree and $C$ its adjoint cycle.\\
The questions on finding a bound of $\chi_{p}(G)$ and $\chi_{p}(S(G))$ if $G$ is a subcubic graph were discussed in several papers \cite{8,9,16,24,25}.
In particular, Gastineau and Togni \cite{16} asked whether $\chi_{p}(S(G)) \leq 5$ for every subcubic graph $G$ and Bre\v{s}ar, Klav\v{z}ar, Rall, and Wash \cite{9} later conjectured this.

\begin{conjecture}[Bre\v{s}ar, Klav\v{z}ar, Rall, and Wash \cite{9}] Let $G$ be a subcubic graph, then $\chi_{p}(S(G)) \leq 5$. \end{conjecture}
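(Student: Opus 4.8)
The plan is to work directly with the structure of the subdivision. First I would record the two features of $S(G)$ that drive everything: it is subcubic, and it is bipartite with parts $A$ (the original vertices of $G$, which keep their degree) and $B$ (the degree-$2$ vertices created on each edge); moreover every cycle of $S(G)$ has length at least $6$, and for any $u,v\in A$ one has $d_{S(G)}(u,v)=2\,d_{G}(u,v)$, since every $u$--$v$ path in $S(G)$ projects to a $u$--$v$ walk of half the length in $G$. I would then aim for the cleanest possible case by a minimal-counterexample argument: smoothing the degree-$2$ vertices of $G$ and peeling off bridges and small edge cuts, so as to assume $G$ is (essentially) a $2$-connected cubic graph, with the boundary configurations colored separately and glued back.

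The first coloring idea is to assign colour $1$ to all of $B$, which is legitimate because $B$ is an independent set. Using $d_{S(G)}(u,v)=2\,d_{G}(u,v)$, the remaining constraints on $A$ become purely $G$-theoretic: since this distance is even on $A$, colours $2$ and $3$ (requiring $S(G)$-distance $\ge 3$ and $\ge 4$) both reduce to $d_G\ge 2$, i.e.\ \emph{independent sets in $G$}, while colours $4$ and $5$ (requiring $\ge 5$ and $\ge 6$) both reduce to $d_G\ge 3$, i.e.\ \emph{$2$-packings in $G$}. In this template the conjecture would follow from the statement that $V(G)$ partitions into two independent sets and two $2$-packings. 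Starting from a proper $3$-colouring $V=C_1\cup C_2\cup C_3$ (Brooks' theorem, as $G\ne K_4$), I would make $C_1,C_2$ into colours $2,3$ and try to split the independent set $C_3$ into two $2$-packings, i.e.\ properly $2$-colour the auxiliary graph $H$ on $C_3$ joining two vertices exactly when they lie at $G$-distance $2$.

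The hard part is that this template is not always feasible, and the obstruction is real: when $G$ has small diameter the graph $H$ can contain odd cycles, so no $2$-colouring exists. For the Petersen graph every pair of distinct vertices is within distance $2$, hence every $2$-packing is a singleton and no colour class of size $\ge 3$ can be split at all, so the template collapses. The remedy --- and the crux of the whole proof --- is to stop forcing all of $B$ into colour $1$ and instead use the subdivision vertices as a flexible reservoir: because $S(G)$ has girth at least $6$ and maximum degree $3$, the sparse colours $3,4,5$ can be placed on $B$-vertices that are mutually far apart, relieving the pressure on the branch vertices and breaking the odd components of $H$ locally. Concretely I would seek a colour-$1$ independent set whose complement in $S(G)$ induces a graph of maximum degree at most $1$ (isolated vertices and short, well-separated pieces), which can then be finished greedily with $2,3,4,5$.

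I expect the genuine difficulty to be global rather than local: guaranteeing such a colour-$1$ set, or equivalently controlling the odd components of $H$ simultaneously everywhere, for \emph{every} subcubic $G$. The most promising route is a discharging argument on a minimal counterexample, exploiting the girth-$6$ and cubic structure to force a reducible configuration --- a short path or low-degree pocket of $S(G)$ on which a partial colouring always extends --- together with the special sequences established elsewhere in this paper ($(1,1,2,3)$ and $(1,2,2,2,2,2)$) as templates for the glued boundary pieces. This last step is exactly where the conjecture remains open, and I would not expect it to go through without a substantial case analysis of the unavoidable configurations.
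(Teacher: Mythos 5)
The statement you were asked to prove is Conjecture~1 of the paper: it is an \emph{open conjecture} (due to Bre\v{s}ar, Klav\v{z}ar, Rall and Wash), not a theorem of the paper, and the paper contains no proof of it. What the paper actually proves is the conjecture restricted to cubic Halin graphs, and it does so through precisely the reduction you rediscovered: by the Gastineau--Togni observation cited in the introduction, a $(1,1,2,2)$-packing coloring of $G$ yields $\chi_{p}(S(G))\le 5$, because putting color $1$ on all subdivision vertices and using $d_{S(G)}(u,v)=2d_{G}(u,v)$ turns colors $2,3$ into independent sets of $G$ and colors $4,5$ into $2$-packings of $G$. Your ``first coloring idea'' is exactly this observation, and your identification of the Petersen graph as the obstruction to running this template over \emph{all} subcubic graphs is also correct; it is the reason Gastineau and Togni's stronger question excludes the Petersen graph, and reference \cite{9} is devoted to that characterization. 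The paper then establishes the stronger property of $(1,1,2,3)$-packing colorability for cubic Halin graphs, which implies $(1,1,2,2)$ and hence the conjecture for that subclass only.

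The genuine gap is the one you concede yourself: the final step --- simultaneously controlling the odd components of your auxiliary graph $H$, or equivalently producing the flexible color-$1$ set, via discharging on a minimal counterexample --- is not carried out, and nothing in the sketch indicates how the unavoidable or reducible configurations would be found, nor how the boundary gluing after smoothing degree-$2$ vertices and cutting bridges would preserve the packing constraints (these constraints are metric, and distances change under both operations, so ``coloring the pieces and gluing back'' needs a real argument). Since this is exactly where the problem remains open, your proposal is a reasonable research program but not a proof, and it cannot be repaired locally: completing it would settle a conjecture that this paper, and the literature, leave open. If your goal was to reproduce what the paper establishes, the correct target is the restricted statement for cubic Halin graphs, where the proof exploits the tree-plus-cycle structure directly: properly $2$-color the characteristic tree, recolor the adjacent cycle with the periodic patterns $i2i3$, and repair the finitely many conflict types created by chords of the outer cycle through the tree --- no discharging and no global argument of the kind you propose.
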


Due to the fact that if $G$ is $(s_{1},s_{2},...,s_{k})$-packing colorable then $S(G)$ is $(1,2s_{1}+1,2s_{2}+1,...,2s_{k}+1)$-packing colorable, Gastineau and Togni \cite{16} observed that if a graph $G$ is $(1, 1, 2, 2)$-packing colorable then $\chi_{p}(S(G)) \leq 5$. To prove that a subclass of subcubic graphs satisfies Conjecture 1, by the previous observation, it is sufficient to prove that it is $(1,1,2,2)$-packing colorable. They also asked the stronger question of whether every subcubic graph except the Petersen graph is $(1, 1, 2, 3)$-packing colorable.
Gastineau and Togni \cite{16} showed that subcubic graphs
are $(1, 2, 2, 2, 2, 2, 2)$-packing colorable and $(1, 1, 2, 2, 2)$-packing colorable. Balogh, Kostochka and Liu \cite{3} proved that subcubic graphs are $(1, 1, 2, 2, 3, 3, 4)$-packing colorable with color 4 used at most once and 2-degenerate subcubic graphs are $(1, 1, 2, 2, 3, 3)$-packing colorable. Moreover, Borodin and Ivanova \cite{4} proved that every subcubic planar graph with girth at least 23 has a $(2, 2, 2, 2)$-packing coloring. Bre\v{s}ar, Gastineau and Togni \cite{11} proved that every subcubic outerplanar graph has a $(1, 2, 2, 2)$-packing coloring and their result is sharp in the sense that there exist subcubic outerplanar graphs that are not $(1,2,2,3)$-packing colorable. Moreover, Kostochka and Liu \cite{neww} recently showed that every 2-connected subcubic outerplanar graph is $(1,1,2)$-packing colorable and every subcubic outerplanar graph is $(1,1,2,4)$ packing-colorable. On our way to proving Conjecture 1 for a subclass of subcubic graphs which are Halin, and depending on the observation of Gastineau and Togni, it would have been sufficient to prove that such graphs are $(1,1,2,2)$-packing colorable, but we proved a stronger result which is that cubic Halin graphs are $(1,1,2,3)$-packing colorable, and this answers the question posed by Gastineau and Togni \cite{16} for these graphs.\\ Concerning the $(1,2,2,\dots )$-packing coloring of cubic Halin graphs, we prove in Section 3 that cubic Halin graphs are $(1,2,2,2,2,2)$-packing colorable.

\section{$(1,1,2,3)$-packing coloring of cubic Halin graphs.}
 \begin{theorem}\label{thm1}
 Every cubic Halin graph is $(1,1,2,3)$-packing colorable.
 \end{theorem}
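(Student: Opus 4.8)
The plan is to argue by induction on $|V(G)|$, exploiting the recursive structure of the characteristic tree. Write $G = T \cup C$ and root $T$ at an internal vertex. If $T$ has a single internal vertex then $G = K_4$, which is the base case: since all four vertices are pairwise adjacent (hence pairwise at distance $1$), assigning the colors $1,2,3,4$ to the four vertices, one each, trivially satisfies every packing constraint. Otherwise I would pick an internal vertex $v$ of maximum depth; because $T$ is cubic and $v$ is not the root, $v$ has exactly two children $a,b$, and maximality forces both to be leaves. In the planar embedding $a$ and $b$ are consecutive on $C$, so the cycle reads $\dots x\,a\,b\,y\dots$ for some leaves $x,y$, and $v$ has a parent $p$ in $T$. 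Note that $\{a,b,v\}$ induces a triangle, which will be the object we recolor.

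Next I would reduce: form $G'$ by deleting $a$ and $b$, moving $v$ into the interior's former position, and inserting $v$ into the cycle between $x$ and $y$, so the new cycle reads $\dots x\,v\,y\dots$. Then $G'$ is again a cubic Halin graph with fewer vertices (its characteristic tree is $T$ with $a,b$ removed, and $v$ is now a leaf). The key observation, which makes the restriction of a coloring behave well, is that this surgery does not decrease any distance between two surviving vertices: stretching the single cycle-vertex $v$ into the path $a\,b$ lengthens every route through this region by one, and no new shortcuts are created since $a,b$ have no other neighbors. Hence, given a $(1,1,2,3)$-packing coloring of $G'$ by the induction hypothesis, its restriction to $V(G)\setminus\{a,b,v\}$ still satisfies all packing constraints inside $G$, because each constraint is a lower bound on a distance that has only grown.

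It then remains to color the triangle $\{a,b,v\}$. As these three vertices are pairwise adjacent they must receive three distinct colors, and I would try to choose them so that, as colors, $a\neq x$, $b\neq y$, $v\neq p$, while every use of color $3$ (resp.\ $4$) on the triangle lies at distance at least $3$ (resp.\ $4$) from all other vertices of that color. The strategy is to color the triangle from $\{1,2\}$ together with just one of the heavy colors $3,4$, resorting to the second heavy color only when $x,y,p$ block all lighter choices; the freedom provided by the two independent colors $1,2$, together with the fact that distances only increased, should dispatch nearly all configurations.

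The hard part will be the long-range constraint of color $4$, whose occurrences must be pairwise at distance at least $4$: deciding whether $4$ may be placed on the triangle requires control of the coloring throughout the radius-$3$ ball around it, which a plain induction hypothesis does not provide. Already the triangular prism (which reduces to $K_4$) shows that one cannot simply freeze the coloring of $G'$: there the only color available for one triangle vertex is blocked by a nearby $4$, yet the prism is $(1,1,2,3)$-packing colorable after a global reshuffle. I therefore expect the proof to split into cases according to the colors of $x,y,p$ and the occurrences of $3,4$ within distance $3$ of the insertion site, and in the tight cases to perform a bounded local recoloring — swapping the offending nearby vertex between $1$ and $2$, or shifting a nearby $3$ or $4$ — before finishing the triangle. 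Guaranteeing that such a bounded recoloring always succeeds, equivalently strengthening the induction hypothesis to record enough information about the colors near the insertion leaf $v$, is where the main technical effort will lie.
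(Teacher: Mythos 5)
Your reduction step is sound: contracting a deepest cherry $\{a,b\}$ onto its parent $v$ does yield a smaller cubic Halin graph $G'$, and your monotonicity observation is correct --- every path of $G$ between surviving vertices that passes through the triangle enters and leaves via $\{x,y,p\}$, and each such transit is at least as long in $G$ as in $G'$, so $d_G(u,w)\ge d_{G'}(u,w)$ and the restricted coloring remains a valid partial packing coloring. But the proof has a genuine gap exactly where you say the ``main technical effort will lie'': you never establish that the triangle $\{a,b,v\}$ can be colored. The three triangle vertices need three distinct colors, and the color with parameter $s=3$ (your color $4$) must avoid all occurrences of that color within distance $3$ of the triangle --- a ball containing up to a few dozen vertices whose colors the plain induction hypothesis says nothing about. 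Your proposed remedy, a ``bounded local recoloring'' of offending nearby vertices, is precisely the step that needs proof: moving a nearby $3$ or $4$ can create a new violation further out, and you give no strengthened induction hypothesis, no case analysis, and no termination argument showing the reshuffle always succeeds. As written, the argument proves nothing beyond the (correct) fact that the problem reduces to a local extension question; the extension question is the theorem.

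For comparison, the paper avoids induction entirely: it $2$-colors the characteristic tree $T$ with $\{1,1'\}$, then overwrites the cycle $C$ with a periodic pattern built from blocks $i2i3$ (adjusted according to $n \bmod 4$ and to whether some pair of consecutive cycle vertices received different tree-colors), so that the positions of the colors $2$ and $3$ on $C$ are controlled globally from the start. The only remaining conflicts are two explicitly described local configurations created by short chords of the cycle through one or two interior tree vertices, and each is repaired by a small, fully specified recoloring (giving an interior vertex the color $2$, swapping $1/1'$ on an arc, or swapping $2/3$ on an arc), followed by a check that these repairs do not interfere with one another. That global control of where $2$ and $3$ sit is exactly the information your induction hypothesis lacks; if you want to salvage the inductive route, you would need to build a comparably strong invariant about the colors appearing near every leaf of the tree into the statement you induct on.
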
 
\begin{proof}
Let $G=T\cup C$ be a cubic Halin graph. The set of colors that we are going to use is $\{1,1',2,3\}$.\\First, we color $T$ by the coloring $\phi _{T}$ in which we use the colors $\{1,1'\}$, this is possible since $T$ is a tree, hence $2$-colorable. Going back to the graph $G$, by adding the edges of the cycle, conflicts may appear on the vertices of $V(C)$ such that two consecutive vertices on $C$ may be receiving a same color. 
So we are going to recolor the vertices of the cycle by benefiting from the remaining colors.\\ Let $n=|V(C)|$ and $C=a_{1}a_{2}\dots a_{n}$ arranged in an increasing order according to a clockwise direction.\\ We have two main cases to study: \paragraph{Case 1:}There exist two consecutive vertices $a_{i_{0}}$ and $a_{i_{0}+1}$ on $C$ such that $\phi _{T}(a_{i_{0}})=1'$ and $\phi _{T}(a_{i_{0}+1})=1$.\\ Let $u$ and $v$ be two vertices on the cycle $C$, we define $d_{C}(u,v)$, the distance between $u$ and $v$ given by the cycle, as $\min \{l(C_{[u,v]}),l(C_{[v,u]})\}$.
 As a first step, we are going to recolor $V(C)$ by a $(1,1,2,3)$-packing coloring $\phi _{C}$ such that this coloring considers $d_{C}(u,v)$ (we mean by this that when giving two vertices same color $i$, we must make sure that the distance $d_{C}$ between these two vertices is at least $i+1$). Then we will deal with the conflicts that appear due to the distance given by $G$ since we possibly have $d_{C}(u,v)> d_{G}(u,v)$ for some $u,v\in C$.\\ $\bullet$ If $n\equiv 0\bmod{4}$ then we color $V(C)$ by repeating the sequence $i2i3$ where $i\in \{1,1'\}$ depending on $\phi_{T}$.\\ $\bullet$ If $n\equiv 1\bmod{4}$, then we color $V(C)$ starting from $a_{i_{0}+1}$ and repeating the sequence $i2i3$ where $i\in \{1,1'\}$ depending on $\phi _{T}$, this is possible since $\phi _{T}(a_{i_{0}})=1'$.\\ $\bullet$ If $n\equiv 2\bmod{4}$, then we color $V(C_{[a_{i_{0}+1},a_{i_{0}-2}]})$ starting from $a_{i_{0}+1}$ and repeating the sequence $i2i3$ where $i\in \{1,1'\}$ depending on $\phi _{T}$. If $\phi _{T}(a_{i_{0}-1})=1$ then we are done, otherwise we give $a_{i_{0}-1}$ the color $2$. \\ $\bullet$ If $n\equiv 3\bmod{4}$, then we color $V(C)$ starting from $a_{i_{0}+1}$ and repeating the sequence $i2i3$ where $i\in \{1,1'\}$ depending on $\phi _{T}$, this is possible since $\phi _{T}(a_{i_{0}})=1'$.\\ Now we are going to deal with the conflicts that appear due to the distance given by $G$. We have two types of conflicts:\\ \\ Conflict of type 1:
  \begin{figure}[h]
\centering
\includegraphics[width=0.3\linewidth]{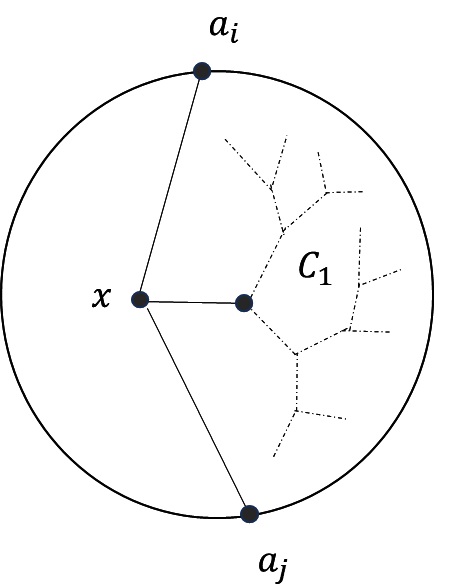}
\caption{ Conflict of type 1: vertices $a_i$ and $a_j$ can not be given both the same color 2 or 3.} 
\label{fig:conflict1}
\end{figure}There exists $x\in T\backslash C$ and $\{a_{i},a_{j}\}\subset C$ such that $\{xa_{i},xa_{j}\} \subset E(G)$, and both $a_{i}$ and $a_{j}$ receive the color 2 or 3 by the coloring $\phi _{C}$ as illustrated in Fig. 1.\\ We have $d(x)=3$, so we may assume, without loss of generality, that the third neighbor of $x$ is in the region $C_{1}=C_{[a_{i},a_{j}]}\cup xa_{i} \cup xa_{j}$. Since $G$ is connected and planar, then the characteristic tree $T$ of $G$ is contained in the region $C_{1}$. Thus, $a_{j}$ and $a_{i}$ are consecutive on $C$, and this contradicts the fact that the coloring $\phi _{C}$ is a $(1,1,2,3)$-packing coloring of $V(C)$.\\ \\
Conflict of type 2: 
\begin{figure}[h]
\centering
\includegraphics[width=0.7\linewidth]{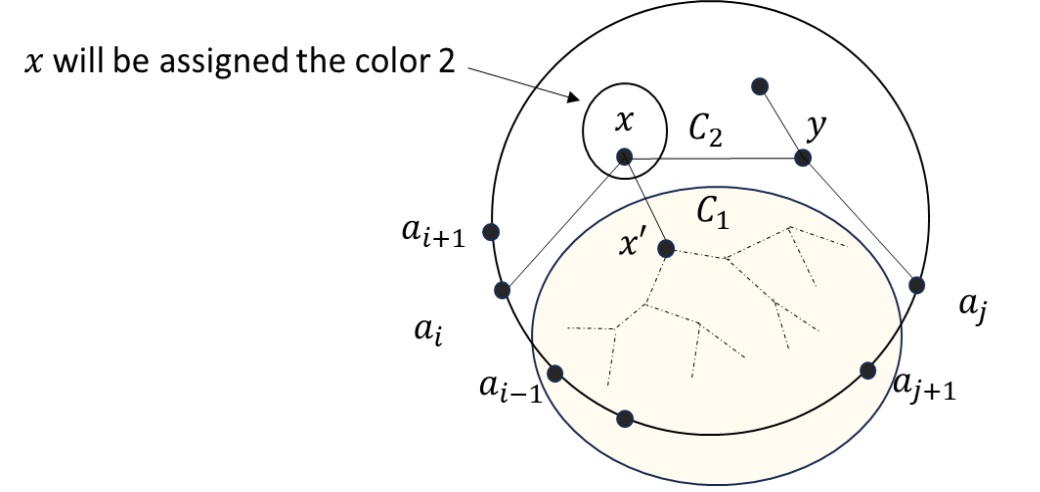}
\caption{ Conflict of type 2: vertices $a_i$ and $a_j$ can not be given both color 3.}
\label{fig:conflict2}
\end{figure}
 There exist $\{x,y\}\subset T\backslash C$, $\{a_{i},a_{j}\}\subset C$ with $j<i$ such that $\{xa_{i},xy,ya_{j}\}\subset E(G)$, and both $a_{i}$ and $a_{j}$ receive the color 3 by the coloring $\phi _{C}$, see Fig. 2.\\ In what follows, we are going to deal with such type of conflict. Since the vertices of $C$ are colored by the packing coloring $\phi _{C}$, and both  $a_{i}$ and $a_{j}$ receive the color $3$ according to $\phi _{C}$, then $d_{C}(a_{j},a_{i})\geq 4$. Thus clearly they are not consecutive on $C$ and so the third neighbor of $x$ and that of $y$ must exist in the two different regions of $C$ which are $C_{1}=C_{[a_{j},a_{i}]}\cup xa_{i} \cup xy \cup ya_{j}$ and $C_{2}=C_{[a_{i},a_{j}]}\cup xa_{i} \cup xy \cup ya_{j}$ since otherwise the characteristic tree will be contained in one of the two regions and $a_{i}$ and $a_{j}$ will be consecutive which is a contradiction. We may assume, without loss of generality, that the third neighbor of $x$, say $x'$, is in the region $ C_{1}$.\\ Now we assign to $x$ the color $2$, and then we recolor $a_{i}$ by the color $\alpha \in\{1,1'\}$, chosen such that $\alpha \neq \phi _{C}(a_{i+1})$.\\ If $\alpha =\phi _{C}(a_{i-1})$ then we switch the colors $1$ and $1'$ of the vertices in the region $C_{1}$ except the vertex $y$.
This switch of the colors is possible since $x$ is now colored $2$.\\ We have $x'\notin C$, since otherwise $C_{[a_{j},a_{i}]}=a_{j}x'a_{i}$ and so $d_{C}(a_{i},a_{j})=2$, a contradiction with the packing coloring $\phi _{C}$. Note that if $x'$ has a neighbor on $C$ then this neighbor must be either $a_{j+1}$ or $a_{i-1}$, since otherwise we get a contradiction with the fact that $G$ is planar and connected.\\ Now we are going to deal with conflicts appearing due to giving $x$ the color $2$.\\ \\ $(i)$ A conflict may appear if $x'$ is adjacent to $a_{j+1}$ and $\phi _{C}(a_{j+1})=2$. According to sequences of colors that we followed, the color $3$ is followed by the color $2$ only in the case when $n\equiv 2\bmod{4}$ in which the pattern of colors is $(i2i3)^*21'$ where this notation $(i2i3)^*$ denotes the repetition of the sequence $i2i3$. So $C_{[a_{j},a_{i}]}$ is colored as follows : $321'(i2i3)^*i2i\alpha $.\\ Now we assign to the vertex $a_{j+1}$ the color $\phi _{T}(a_{j+1})$, assign to the vertex $a_{j+2}$ the color 2, switch the colors $2$ and $3$ in $C_{[a_{j+3},a_{i-1}]}$.\\ Thus $C_{[a_{j},a_{i}]}$ will have the following pattern: $3 \phi _{T}(a_{j+1}) 2 (i3i2)^*i3i\alpha $.\\ \\ $(ii)$ A conflict may appear if $\phi _{C}(a_{i+1})=2$. As before, we notice that this pattern, which is color 3 followed by color 2, only appears in the case when $n\equiv 2\bmod{4}$. Then $C_{[a_{i},a_{j}]}$ is colored as follows : $\alpha 21'(i2i3)^*$. So we assign to $a_{i+1}$ the color $3$.\\ Note that the conflicts $(i)$ and $(ii)$ do not occur at the same time, because this pattern exists once in the coloring of $V(C)$. So the coloring of $a_{i+1}$ in $(ii)$ does not make conflict with the switch of the colors $2$ and $3$ mentioned in $(i)$.\\ \\ $(iii)$ A conflict may appear if $a_{i-1}$ receives the color $2$. This is not possible since according to the sequence of colors used to color $V(C)$, the color $2$ is not followed by $3$ in any of the cases.\\ After dealing with the conflict of type 2, by giving $x$ the color $2$, we want to make sure that if there exist two or more vertices in $T\backslash C$ having the same situation as $x$ and so receiving the color $2$, then they do not make a conflict with each other.\\ Let $S=\{x_{i}, i\in I\}$ be the set of vertices in $T\backslash C$ receiving color $2$ by treating conflicts of type 2. For every $x_{i}\in S$, set $\{a_{i_{1}},a_{i_{2}}\}\subset C$  with $i_{1}<i_{2}$, and $y_{i}\in T\backslash C$ to be the vertices such that $x_{i}$ and $y_{i}$ are adjacent in which one of them is adjacent to $a_{i_{1}}$ and the other is adjacent to $a_{i_{2}}$ where $\{a_{i_{1}},a_{i_{2}}\}$ receive the color $3$. Also for every $x_{i}$, let $C^{i}_{1}$ and $C^{i}_{2}$ be the two regions of the cycle defined as $C_{1}$ and $C_{2}$ are defined previously. Let $x_{i}$ and $x_{j}$ be two vertices in $S$, the new color assigned to these vertices is the color $2$.\\ Note that:
  \begin{equation} N(x_{i})\subseteq C^{i}_{1}\, and\,  N(x_{j})\subseteq C^{j}_{1} \tag{*} \end{equation} (since as we mentioned before, the vertices in $S$, that are assigned the new color 2, are chosen according to this location of their neighbors).\\ In what follows, we are going to study three cases.\\ Subcase 1: $[a_{i_{1}},a_{i_{2}}]\cap [a_{j_{1}},a_{j_{2}}]=\emptyset $. By using (*), we get that neither $x_{i}y_{j}$ nor $x_{j}y_{i}$ belongs to $E(G)$. So $d_{G}(x_{i},x_{j})\ge 3$ and there exists no conflict.\\ Subcase 2: $a_{i_{1}}=a_{j_{2}}$. By using (*), we conclude that $C=C_{[a_{j_{1}},a_{i_{2}}]}\cup a_{i_{2}}a_{j_{1}}$, and so $a_{i_{2}}$ and $a_{j_{1}}$ are consecutive on $C$ and both receiving a color 3 according to the coloring $\phi _{C}$, which is a contradiction.\\ Subcase 3: $[a_{j_{1}},a_{j_{2}}]\subset [a_{i_{1}},a_{i_{2}}]$. Conflict may appear only if $x_{i}$ and $y_{j}$ are adjacent, but if such an edge exists then we get that $a_{i_{1}}$ and $a_{j_{1}}$ are consecutive on $C$, a contradiction with the coloring $\phi _{C}$.\paragraph{Case 2:} All the vertices of $C$ receive the same color according to $\phi_{T}$. \\We may suppose, without loss of generality, that all the vertices of $C$ receive the color 1.\\ First note that if $n\equiv 0 \bmod{4}$, then $\chi _{p}(C)=3$ and so we can color $C$ by the colors $\{1,2,3\}$ by repeating the sequence $1213$.\\ In what follows, we suppose that $n\bmod{4}\in\{1,2,3\}$.\\ Our aim is to choose a vertex from $T\backslash C$ to give it the color $2$, and give its neighbor on $C$ the color $1'$, and then deal with the problem similarly to the first case.\\Since all the vertices of $C$ receive the same color according to $\phi_{T}$, then there exists no path $P$ of odd length joining two vertices $a_{i}$ and $a_{j}$ on $C$ such that $P\cap C=\{a_{i},a_{j}\}$.\\ Now we suppose that there exists a vertex $w\in T\backslash C$ such that $w$ has exactly one neighbor on $C$. 
\begin{figure}[h]
\centering
\includegraphics[width=0.6\linewidth]{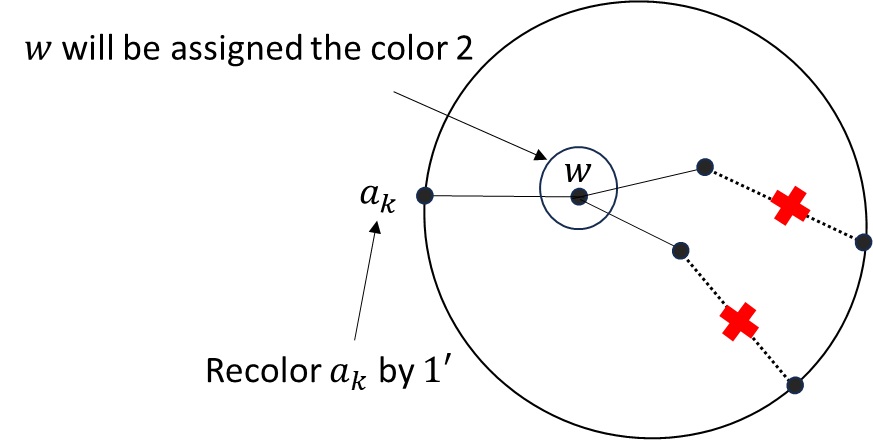}
\caption{$w$ has one neighbor on $C$ and there is no odd path between two vertices of $C$ that passes through $w$.} 
\label{fig:case2}
\end{figure}  
  
  Assign to $w $ the color $2$ and give its neighbor on $	C$, say $a_{k}$, the color $1'$.\\ If $n\equiv 1\bmod{4}$, then we color $V(C\backslash \{a_{k}\})$ starting from $a_{k+1}$ by repeating the sequence $1213$. Vertex $w$ has no neighbors on $C$ other than $a_{k}$, so let $w_{1}$ and $w_{2}$ be the neighbors of $w$ in $T\backslash C$. A conflict may appear if $w_{i}$ for some $i\in\{1,2\}$ is adjacent to a vertex $u$ on $C$ receiving the color $2$, but $w_{i}$ has no neighbors on $C$ since otherwise $a_{k}ww_{i}u$ is an odd length path, see Fig. 3.\\ If $n=2 \bmod 4$. Color $V(C\backslash \{a_{k}\})$ starting from $a_{k+1}$ by repeating the sequence $1213$. This coloring assigns to $a_{k-1}$ the color $1$, so it has no conflict with $w$. Also, as in the previous case, the neighbors of $w$ make no conflicts.\\ If $n=3 \bmod 4$, then color $V(C\backslash \{a_{k},a_{k-1},a_{k-2}\})$ starting from $a_{k+1}$ by repeating the sequence $1213$, and assign to $a_{k-1}$ the color $1$ and to $a_{k-2}$ the color $2$.\\ \\  Let $M$ be the set of vertices in $T\backslash C$ that are adjacent to $C$, and suppose it is of cardinality $m$. Now assume there exists no vertex in $T\backslash C$ which has exactly one neighbor on $C$. So, every vertex in $M$ has two neighbors on $C$. It is clear, by the definition of the Halin graph, that any vertex on $C$ is adjacent to one vertex in $M$. Therefore, we get that $n=2m$, and so $n$ is even and we are in the case $n=2 \bmod 4$. 
  Let $w\in T\backslash C $, and let $a_{i}$ and $a_{j}$ be the neighbors of $w$ on $C$ with $i<j$. Since $G$ is planar and connected, we conclude that $a_{i}$ and $a_{j}$ are consecutive. We assign to $w$ the color 2 and to $a_{j}$ the color $1'$ and color $ V(C\backslash \{a_{j}\})$ starting from $a_{j+1}$ till reaching $a_{i}$ by repeating the sequence $1213$.\\ \\
  Finally, in Case 2, conflicts of type 1 are treated similarly as in Case 1. And since there exists no path $P$ of odd length joining two vertices $a_{i}$ and $a_{j}$ on $C$ such that $P \cap C = \{ai, aj \}$, so we have no conflicts of type 2. This completes the proof.  
 \end{proof}
 
 \subsection{Sharpness of the proved result}
 First, it should be noted that without the condition that the graph is cubic, it is easy to find examples proving that a Halin graph is not $(1,1,2,3)$-packing colorable in general.\\
 Moreover, the result of Theorem 1 is sharp in the sense that there exists a cubic Halin graph that is not $(1,1,3,3)$-packing colorable.\\ \begin{claim} The graph $G_{1}$ shown in Figure 4, which is clearly cubic Halin, is not $(1,1,3,3)$-packing colorable.\end{claim} 
 \begin{proof}
     
Suppose that $G_{1}$ is $(1,1,3,3)$-packing colorable and let $\{1_{a},1_{b},3_{a},3_{b}\}$ be the set of colors used to color it.
The graph $G_{1}$ has three disjoint triangles $T_{1},T_{2}$ and $T_{3}$. So, in each triangle, there exists a vertex that must receive a color from the set $\{3_{a},3_{b}\}$. But the diameter of the graph is $3$, which is a contradiction.   \end{proof}
Since $G_1$ is not $(1,1,3,3)$-packing colorable, it is not $(1,2,3,4)$-packing colorable either. Therefore, cubic Halin graphs are neither $(1,1,3,3)$-packing colorable nor 4-packing colorable in general.
 \begin{figure}[h]
\centering
\includegraphics[width=0.3\linewidth]{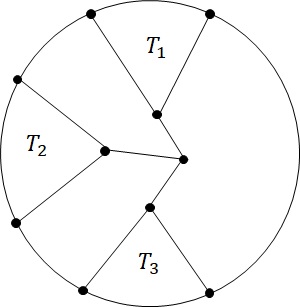}
\caption{ A non $(1,1,3,3)$-packing colorable cubic Halin graph $G_{1}$}
\label{fig: example}
\end{figure}
\section{$(1,2,2,2,2,2)$-packing coloring of cubic Halin graphs}
Observe first that any tree $T$ of order at least $4$ such that all its vertices are either of degree $1$ or $3$, contains a vertex that is adjacent to at least two leaves. This is easy to be proved, so we will omit its proof.

Moreover, as a conclusion of the previous remark and by using the induction, we get that such type of tree has an even number of vertices.

\begin{lemma} \label{ll} Let $T$ be a tree of order at least $4$, such that all its vertices are either of degree $1$ or $3$. Then $T$ admits a $(1,2,2,2)$-packing coloring in which all the leaves are given the color 1.\end{lemma}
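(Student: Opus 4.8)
The plan is to argue by induction on the order $n = |V(T)|$, which by the preceding remark is even and at least $4$. Throughout, I will use one color, call it $1$, as the $1$-packing color (its color class only needs to be independent) and three colors $2,2',2''$ as the three $2$-packing colors (each class must have pairwise distances at least $3$). Note that since $T$ is a tree of order at least $4$, no two leaves are adjacent, so coloring all leaves with $1$ never violates the constraint on color $1$; the whole difficulty is to color the internal vertices. For the base case $n=4$, a count of degrees forces $T=K_{1,3}$: color its three leaves $1$ and its center $2$.

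For the inductive step, take $n \ge 6$. By the remark there is a vertex $v$ adjacent to two leaves $\ell_1, \ell_2$; let $w$ be its third neighbor. Since $n\ge 6$, $w$ is not a leaf (otherwise $T=K_{1,3}$), so $w$ has degree $3$, with two further neighbors $w_1, w_2$. Delete $\ell_1$ and $\ell_2$; then $v$ becomes a leaf and the resulting tree $T'$ still has all degrees in $\{1,3\}$ and order $n-2 \ge 4$. By the induction hypothesis, $T'$ has a $(1,2,2,2)$-packing coloring $\phi'$ in which every leaf -- in particular $v$ -- receives color $1$. I then extend $\phi'$ to $T$ by setting $\phi(\ell_1)=\phi(\ell_2)=1$. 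Since $v$ is now internal and adjacent to the color-$1$ vertices $\ell_1,\ell_2$, it must be recolored with one of $2,2',2''$. The only vertices within distance $2$ of $v$ are $\ell_1,\ell_2,w,w_1,w_2$, and the leaves carry color $1$; hence the $2$-packing colors forbidden at $v$ are exactly those appearing among $w, w_1, w_2$.

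This gives a clean dichotomy. If $\{\phi'(w),\phi'(w_1),\phi'(w_2)\}$ does not exhaust $\{2,2',2''\}$, I assign to $v$ a free $2$-color and finish: removing $v$ from color $1$ keeps that class independent, and the chosen color is absent from the distance-$2$ ball of $v$. The hard case is when $w,w_1,w_2$ carry all three colors $2,2',2''$, so that every $2$-color is blocked at $v$. Here the key move is to recolor $w$ itself with color $1$ and then set $\phi(v)=\phi'(w)$. This is legitimate: in this case $w_1,w_2$ are internal (a leaf would have color $1$, not a $2$-color), so the neighbors of $w$ are $v$ (about to receive a $2$-color) and $w_1,w_2$ (both $2$-colors), none of them color $1$; thus color $1$ stays independent. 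Moreover, recoloring $w$ vacates its old $2$-color in precisely the distance-$1$ position that blocked $v$, so $\phi(v)=\phi'(w)$ now meets no vertex of that color within distance $2$, while the classes $2'$ and $2''$ are untouched.

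The step I expect to be the main obstacle is this last case: checking that when all three $2$-colors appear on $w, w_1, w_2$, one can safely recolor $w$ with $1$ without creating a new conflict. The crux is the observation that the three $2$-colors on $w,w_1,w_2$ force $w_1,w_2$ to be non-leaves, and hence all neighbors of $w$ to avoid color $1$, and that vacating $w$'s color is exactly what unblocks $v$. Everything else reduces to verifying that adding or removing a single vertex from a color class preserves the relevant packing condition, which is routine because in a tree the distance-$2$ ball of $v$ is the explicit five-vertex set described above.
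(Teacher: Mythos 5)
Your proof is correct and follows essentially the same route as the paper's: remove two sibling leaves, apply induction so that their common neighbor $v$ is a color-$1$ leaf of the smaller tree, reattach the leaves with color $1$, and recolor $v$ with a $2$-color that is free on its distance-$2$ ball $\{w,w_1,w_2\}$, swapping $w$ to color $1$ and taking its $2$-color for $v$ when all three $2$-colors are blocked. Your write-up is in fact slightly more careful than the paper's, since you explicitly verify that in the blocked case $w$'s neighbors all carry $2$-colors, so moving $w$ into the color-$1$ class is safe.
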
 

\begin{proof} We will proceed the proof by induction on $v(T)=|V(T)|$.\\ For $v(T)=4$, it is clearly true. Let us prove it for $v(T)=m$, knowing that it is true up to $m-2$. Let $T$ be a tree of order $m$ satisfying the above conditions. Let $x$ and $y$ be two leaves that are adjacent to the same vertex $z$. Let $T'=T\backslash \{x,y\}$. By applying the induction hypothesis on $T'$, we get that $T'$ admits a $(1,2,2,2)$-packing coloring such that all its leaves are colored by 1. Going back to $T$, by adding the vertices $x$ and $y$, we have all the leaves of $T$ are colored by 1 except $x$ and $y$. The color assigned to $z$ is 1 because it is a leaf in $T'$. Now we are going to give both vertices $x$ and $y$ the color 1, and then recolor $z$. Note that the set of colors that we are using is $\{1,2_{a},2_{b},2_{c}\}$. Let $z'$ be the third neighbor of $z$ and $N^{2}$ be the set of neighbors of $z'$ other than $z$. If there exists a color $i\in \{2_{a},2_{b},2_{c}\}$ such that $i$ is not given to any of the vertices of $N^{2}\cup \{z'\}$, then we assign to $z$ the color $i$. Otherwise, we give $z'$ the color 1 and assign to $z$ the color that was previously given to $z'$ from the set $\{2_{a},2_{b},2_{c}\}$. Now $T$ admits a $(1,2,2,2)$-packing coloring in which all its leaves are given the color 1, and this completes the proof.  \end{proof} 

\begin{theorem} If $G$ is a cubic Halin graph, then $G$ is $(1,2,2,2,2,2)$-packing colorable. \end{theorem}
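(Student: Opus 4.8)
The plan is to reduce everything to Lemma \ref{ll} together with an elementary recoloring of the adjoint cycle. Write $G = T \cup C$ with $C = a_1 a_2 \cdots a_n$. Since $G$ is cubic, every leaf of $T$ has degree $1$ in $T$ and every internal vertex has degree $3$, so $T$ satisfies the hypothesis of Lemma \ref{ll}. First I would color $T$ by a $(1,2,2,2)$-packing coloring $\phi_T$ using colors $\{1,2_a,2_b,2_c\}$ in which every leaf (equivalently, every vertex of $C$) receives color $1$; the parent in $T$ of each leaf is then colored by one of $2_a,2_b,2_c$, since it is adjacent to a vertex of color $1$. I keep two further colors $2_d,2_e$ in reserve, so the full palette is $\{1,2_a,2_b,2_c,2_d,2_e\}$, matching the six colors of a $(1,2,2,2,2,2)$-packing coloring.

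Next I would record two structural facts that make $\phi_T$ survive inside $G$. Because every leaf has degree $1$ in $T$, two internal vertices at $T$-distance at least $3$ stay at $G$-distance at least $3$ (a shortcut through $C$ would force a common neighbour on $C$, impossible as a leaf has a unique neighbour in $T$); hence the restriction of $\phi_T$ to $T\setminus C$ remains a valid packing coloring in $G$, and the only conflicts introduced by the cycle edges are pairs of adjacent vertices of $C$ both colored $1$. The key planarity observation is that if an internal vertex $v$ has two leaf children, these two leaves are consecutive on $C$: deleting $v$ splits $T$ into three parts whose leaf-sets are three cyclically consecutive arcs of $C$, two of them single leaves, and since any two of three cyclic arcs share a boundary, these singletons are adjacent. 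Consequently two vertices of $C$ have a common neighbour in $G$ only when they are at cycle-distance $1$ or $2$, giving the equivalence $d_G(a_i,a_j)\ge 3 \iff d_C(a_i,a_j)\ge 3$.

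With these facts I would recolor $V(C)$ using only $\{1,2_d,2_e\}$. As $2_d,2_e$ do not occur in $T$, a cycle vertex receiving such a color only needs $G$-distance at least $3$ from other cycle vertices of the same color, which by the equivalence above holds as soon as the coloring is a packing coloring for $d_C$; and a cycle vertex kept at color $1$ is never adjacent to a color-$1$ vertex of $T$, its only neighbour in $T$ being its ``$2$''-colored parent. So it suffices to $(1,2,2)$-packing color $C_n$ with respect to $d_C$, which I would do by tiling $C$ with the blocks $1\,2_d\,2_e$ (length $3$) and $1\,2_d\,1\,2_e$ (length $4$): each block starts with $1$ and ends with $2_e$, and a short check of the junctions and of the wrap-around shows equal ``$2$''-colors stay at cycle-distance at least $3$ while no two color-$1$ vertices become adjacent. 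Since every integer $n\ge 3$ with $n\ne 5$ can be written as $3a+4b$ with $a,b\ge 0$ (the Frobenius number of $\{3,4\}$ is $5$), this yields the coloring for all $n\ne 5$ without ever touching $T$.

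The remaining case $n=5$ is the main obstacle, and it is genuinely special. Such a cubic Halin graph is unique, its tree being a path $p-q-r$ of internal vertices with $p,r$ carrying two leaves each and $q$ one leaf, and the clean reduction fails: $C_5$ forces three pairwise-distinct ``$2$''-colors among its vertices (its independence number is $2$ and every ``$2$''-packing of $C_5$ is a single vertex, as $\mathrm{diam}(C_5)=2$), yet the only ``$2$''-colors unused by $\phi_T$ are $2_d,2_e$, and the cycle vertex that would need the third color lies within $G$-distance $2$ of all of $p,q,r$. I would therefore treat $n=5$ by an explicit coloring that also recolors an internal vertex, namely $q\mapsto 1$; this frees enough room to complete the five cycle vertices using $1,2_a,2_b,2_c,2_d$, each ``$2$''-reuse being verified to sit at $G$-distance at least $3$ from the matching internal vertex. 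This exceptional configuration is exactly where the ``freeze $T$, recolor only $C$'' reduction breaks down and must be replaced by a hand-built coloring; everything else is the routine block-tiling argument justified by the two structural facts above.
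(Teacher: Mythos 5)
Your proposal is correct and follows essentially the same route as the paper: apply Lemma~\ref{ll} to obtain a $(1,2,2,2)$-packing coloring of $T$ in which every leaf gets color $1$, then retile the cycle using only $\{1,2_d,2_e\}$ with the blocks $1\,2_d\,2_e$ and $1\,2_d\,1\,2_e$. The one substantive difference is that you correctly note that $n=5$ cannot be written as $3a+4b$ and treat the unique cubic Halin graph with a $5$-cycle by a separate explicit coloring, a case the paper's one-line ``combining the sequences depending on the order of the cycle'' silently skips; your two distance-preservation observations (internal--internal distances survive in $G$, and $d_G\ge 3$ between cycle vertices is equivalent to $d_C\ge 3$) likewise supply justification the paper needs but omits.
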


\begin{proof} Let $G=T\cup C$ be a cubic Halin graph and $\{1,2_{a},2_{b},2_{c},2_{d},2_{e}\}$ be the set of colors that we are going to color $G$ with. First, by using Lemma \ref{ll}, we use the set of colors $\{1,2_{a},2_{b},2_{c}\}$ to color the vertices by a $(1,2,2,2)$-packing coloring such that all the vertices of the cycle $C$, which are the leaves of the tree $T$, are colored by $1$. Then, except if $|C|=5$, we are going to recolor the vertices of $C$ by combining the sequences $12_{d}2_{e}$ and $12_{d}12_{e}$ depending on the order of the cycle. Since any number $n\geq 4$ different from 5 is a linear combination of $3$ and $4$, then we can combine the sequences. Similarly to the proof of Theorem~\ref{thm1}, it can be shown that conflicts of type 1 are not possible hence the obtained coloring is a $(1,2,2,2,2,2)$-packing coloring. If $|C|=5$, then $G$ is unique and has order 8, see Fig. \ref{fig: exampl}, and it can be easily $(1,2,2,2,2,2)$-packing colored. This completes the proof. \end{proof}
\begin{figure}[h]
\centering
\includegraphics[width=0.3\linewidth]{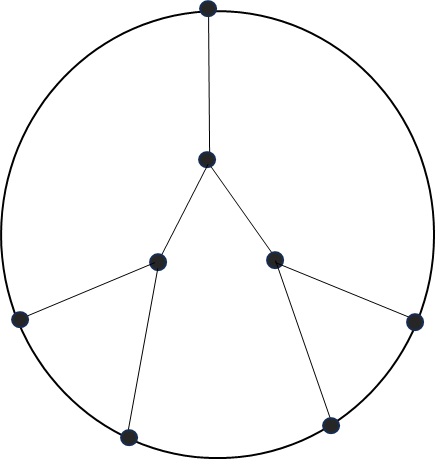}
\caption{ The unique cubic Halin graph $G$ having five vertices on its cycle.}
\label{fig: exampl}
\end{figure}

 Notice that the unique cubic Halin graph on 6 vertices is not $(1,2,2,2)$-packing colorable, however we are not able to find a cubic Halin graph that is not $(1,2,2,2,2)$-packing colorable, hence the result of the previous theorem is maybe not sharp.
\section{Open questions}
Our interests are directed toward answering the following questions, either by proving their correctness or by giving a counter-example:

Is every cubic Halin graph $(1,1,2,4)$-packing colorable? $(1,1,2,5)$-packing colorable? 

Is every cubic Halin graph $(1,2,2,2,2)$-packing colorable?

\paragraph{Acknowledgment.} The authors would like to acknowledge the National Council for Scientific Research of Lebanon (CNRS-L) and the Agence Universitaire de la Francophonie in cooperation with Lebanese University for granting a doctoral fellowship to Batoul Tarhini.

\end{document}